\let\RE\Re
\let\Re=\undefined
\DeclareMathOperator{\Re}{\RE e}
\let\IM\Im
\let\Im=\undefined
\DeclareMathOperator{\Im}{\IM m}
\newcommand{\R}{\mathbbm R}
\newcommand{\C}{\mathbbm C}
\newcommand{\SQ}{\mathbbm S}
\renewcommand{\i}{\mathrm i}
\newcommand{\Di}{\Omega_{int}}
\newcommand{\De}{\Omega_{ext}}
\newcommand{\Hi}{\textbf H^{int}}
\newcommand{\Ei}{\textbf E^{int}}
\newcommand{\He}{\textbf H^{ext}}
\newcommand{\Ee}{\textbf E^{ext}}
\newcommand{\Hin}{\textbf H^{inc}}
\newcommand{\Ein}{\textbf E^{inc}}
\newcommand{\hin}{\textbf h^{inc}}
\newcommand{\ein}{\textbf e^{inc}}
\newcommand{\n}{\bm{\hat{n}}}
\newcommand{\di}{\bm{\hat{d}}}
\newcommand{\p}{\bm{\hat{p}}}
\newcommand{\cee}{e_3^{ext}}
\newcommand{\che}{h_3^{ext}}
\newcommand{\cei}{e_3^{int}}
\newcommand{\chii}{h_3^{int}}
\newcommand{\ta}{\bm{\hat{\tau}}}
\newcommand{\bb}[1]{\bm{\mathcal{#1}}}
\DeclareMathOperator{\curl}{\nabla \times}
\newaliascnt{proposition}{lemma}
\newaliascnt{theorem}{lemma}
\newtheorem{theorem}[theorem]{Theorem}
\newaliascnt{iterative}{lemma}
\newtheorem{iterative}[iterative]{Iterative Scheme}
\newaliascnt{assumption}{lemma}
\newaliascnt{remark}{lemma}
\newtheorem{remark}[remark]{Remark}
\theoremstyle{nonumberplain}
\newtheorem{proof}{Proof}
\newcommand{\logmessage}[1]{\@latex@warning{#1}}
\providecommand{\keywords}[1]{\small\textbf{Keywords} #1}
\begin{document}

\title{The inverse electromagnetic scattering problem by a penetrable cylinder at oblique incidence}

\author[1]{Drossos Gintides\thanks{dgindi@math.ntua.gr}}
\author[2]{Leonidas Mindrinos\thanks{leonidas.mindrinos@univie.ac.at}}
\affil[1]{\small Department of Mathematics, National Technical University of Athens, Greece.}
\affil[2]{Computational Science Center, University of Vienna,  Austria.}

\renewcommand\Authands{ and }
\normalsize

\date{}

\maketitle

\begin{abstract}
In this work we consider the method of non-linear boundary integral equation for solving numerically the inverse scattering problem of obliquely incident electromagnetic waves by a penetrable homogeneous cylinder in three dimensions. We consider the indirect method and simple representations for the electric and the magnetic fields in order to derive a system of five integral equations, four on the boundary of the cylinder and one on the unit circle where we measure the far-field pattern of the scattered wave. We solve the system iteratively by linearizing only the far-field equation. Numerical results illustrate the feasibility of the proposed scheme.

\vspace{0.2cm}
\keywords{inverse electromagnetic scattering, oblique incidence, integral equation method}
\end{abstract}

\section{Introduction}

The inverse obstacle scattering problem is to image the scattering object, i.e. find its shape and location, from the knowledge of the far-field pattern of the scattered wave. The medium is illuminated by light at given direction and polarization. Then, Maxwell's equations are used to model the propagation of the light through the medium, see \cite{ColKre13, ColKre14} for an overview. Due to the complexity of the combined system of equations for the electric and the magnetic fields, it is common to impose additional assumptions on the incident illumination and the nature of the scatterer.   

We consider time-harmonic incident electromagnetic plane wave that due to the linearity of the problem will result to a time-independent system of equations. In addition, the penetrable object is considered as an infinitely long homogeneous cylinder. Then, it is characterized by constant permittivity and permeability. The problem is further simplified if we impose oblique incidence for the incident wave. 

The three-dimensional scattering problem modeled by Maxwell's equations is then equivalent to a pair of two-dimensional Helmholtz equations for two scalar fields (the third components of the electric and the magnetic fields). This approach reduces the difficulty of the problem but results to more complicated boundary conditions. The transmission conditions now contain also the tangential derivatives of the electric and magnetic fields. In \cite{GinMin16} we showed that the corresponding direct problem is well-posed and we constructed a unique solution using the direct integral equation method. A similar problem has been considered for an impedance cylinder embedded in a homogeneous \cite{WanNak12}, and in an inhomogeneous medium \cite{NakWan13}. A numerical solution of the direct problem has been also proposed using the finite element method \cite{CanLee91}, the Galerkin method \cite{LucPanSche10}, and the method of auxiliary sources \cite{TsiAliAnaKak07}.

On the other hand, the inverse problem is non-linear and ill-posed. The non-linearity is due to the dependence of the solution of the scattering problem on the unknown boundary curve. The smoothness of the mapping from the boundary to the far-field pattern reflects the ill-posedness of the inverse problem. The unique solvability of the inverse problem is still an open problem. The first and only, to our knowledge, uniqueness result was presented recently in \cite{NakSleWan12} for the case of an impedance cylinder using the Lax-Phillips method.

In this work, we solve the inverse problem by formulating an equivalent system of non-linear integral equations that is solved using a regularized iterative scheme. This method was introduced by Kress and Rundell \cite{KreRun05} and then considered in many different problems, in acoustic scattering problems \cite{IvaJoh07, IvaJoh08}, in elasticity \cite{ChaGinMin16, GinMin11} and in electrical impedance problem \cite{EckKre07}. We propose an iterative scheme that is based on the idea of Johansson and Sleeman \cite{JohSle07} applied to the inverse problem of recovering a perfect conducting cylinder. See \cite{AltKre12a, ChaGinMin16} for some recent applications. We assume integral representations for the solutions that results to a system consisting of four integral equations on the unknown boundary (considering the transmission conditions) and one on the unit circle (taking into account the asymptotic expansion of the solutions). 

We solve this system in two steps. First, given an initial guess for the boundary curve we solve the well-posed subsystem (equations on the boundary) to obtain the corresponding densities and then we solve the linearized (with respect to the boundary) ill-posed far-field equation to update the initial approximation of the radial function. We consider Tikhonov regularization and the normal equations are solved by the conjugate gradient method.

The paper is organized as follows: in Section~\ref{direct} we present the direct scattering problem, the elastic potentials and the equivalent system of integral equations that provide us with the far-field data. 
The inverse problem is stated in Section~\ref{inverse} where we construct an equivalent system of integral equation using the indirect integral equation method. In Section~\ref{numerics} the two-step method for the parametrized form of the system  and the necessary Fr\'echet derivative of the integral operators are presented. The numerical examples give satisfactory results and justify the applicability of the proposed iterative scheme.

 \section{The direct problem}\label{direct}
 
   We consider the scattering of an electromagnetic wave by a penetrable cylinder in $\R^3$. Let $\textbf x  = (x,y,z) \in \R^3 .$ We denote by $\Di = \{ \textbf x : (x,y) \in \Omega , z \in \R \}$ the cylinder,  where $\Omega$ is a bounded domain in $\R^2$ with smooth boundary $\Gamma .$ The cylinder $\Di$ is oriented parallel to the $z$-axis and $\Omega$ is its horizontal cross section. We assume constant permittivity $\epsilon_0$ and permeability $\mu_0$  for the exterior domain $\De : = \R^3 \setminus \overline{\Omega}_{int}.$ The interior domain  $\Di$ is also characterized by constant parameters $\epsilon_1$ and $\mu_1 .$

We define the exterior magnetic $\He (\textbf x , t)$ and electric field $\Ee (\textbf x , t)$
for $\textbf x \in \De, \, t\in \R$ and the interior fields  $\Hi (\textbf x , t)$ and $\Ei (\textbf x , t)$ for $\textbf x \in \Di, \, t\in \R ,$ that satisfy the Maxwell's equations
\begin{equation}\label{eq_Maxwell}
  \begin{aligned}
\curl \Ee + \mu_0 \frac{\partial \He}{\partial t} &= 0, & \curl \He - \epsilon_0 \frac{\partial \Ee}{\partial t} &= 0,  & \textbf x \in \De ,\\
\curl \Ei + \mu_1 \frac{\partial \Hi}{\partial t} &= 0, & \curl \Hi - \epsilon_1 \frac{\partial \Ei}{\partial t} &= 0,  
& \textbf x \in \Di . 
\end{aligned}
\end{equation}
and the transmission conditions
\begin{equation}\label{bound_cond}
\n \times \Ei  =   \n \times \Ee, \quad \n \times \Hi  =   \n \times \He,  \quad \textbf x\in \Gamma ,
\end{equation}
where $\n$ is the outward normal vector, directed into $\De$. 

We illuminate the cylinder with an incident electromagnetic plane wave at oblique incidence, meaning transverse magnetic (TM) polarized wave. We define by $\theta$  the incident angle with respect to the negative $z$ axis and by $\phi$ the polar angle of the incident direction $\di$ (in spherical coordinates), see Figure~\ref{Fig1}. Then, $\di = (\sin \theta \cos \phi , \sin \theta \sin \phi , -\cos \theta )$ and the polarization vector is given by $\p = (\cos \theta \cos \phi , \cos \theta \sin \phi , \sin \theta ), $ satisfying $\di \perp \p$ and assuming that $\theta \in (0,\pi /2) \cup (\pi/2 , \pi). $ 

\begin{figure}
\begin{center}
\includegraphics[scale=0.6]{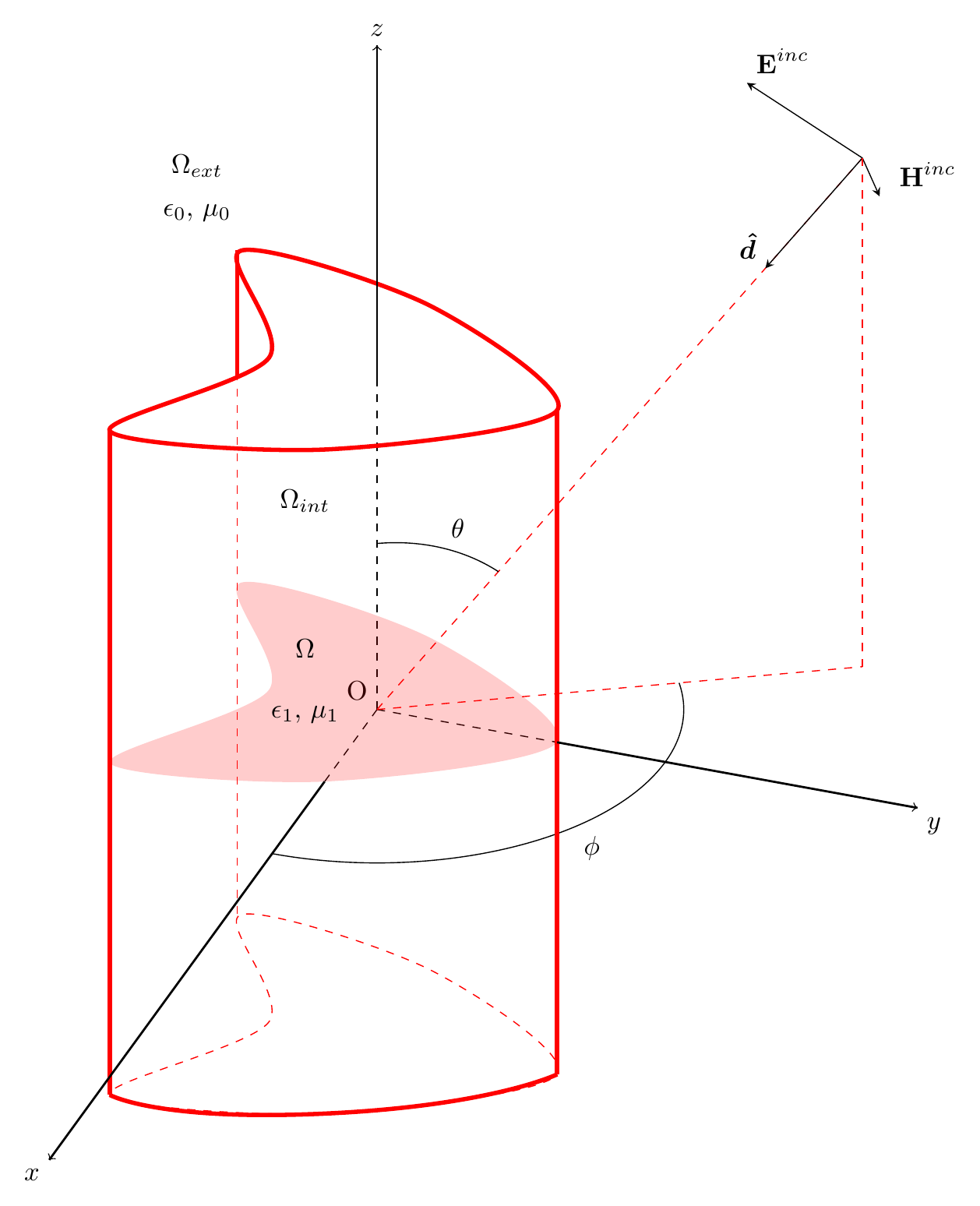}
\caption{The geometry of the scattering problem.}\label{Fig1}
\end{center}
\end{figure}

In the following, due to the linearity of the problem we suppress the time-dependence of the fields and because of the cylindrical symmetry of the medium we express the incident fields as separable functions of $\bm x := (x,y)$ and $z.$ 

Let $\omega >0$ be the frequency and $k_0 = \omega \sqrt{\mu_0 \epsilon_0}$  the wave number in $\De$. We define $\beta = k_0 \cos \theta$ and $\kappa_0 = \sqrt{k_0^2 - \beta^2} = k_0 \sin \theta$ and it follows that the incident fields can be decomposed to \cite{GinMin16}
\begin{equation}\label{eq_incident3}
\begin{aligned}
\Ein (\textbf x ;\di , \p ) = \ein (\bm x) \, e^{-i\beta z}, \quad
\Hin (\textbf x  ;\di , \p ) = \hin (\bm x) \, e^{-i\beta z},
\end{aligned} 
\end{equation}
where
\begin{align*}
\ein (\bm x) &= \frac1{\sqrt{\epsilon_0}} \, \p  \, e^{i\kappa_0 (x \cos \phi + y \sin \phi )}, \quad
\hin (\bm x) = \frac1{\sqrt{\mu_0}} \, (\sin \phi , -\cos \phi ,0) \, e^{i\kappa_0 (x \cos \phi + y \sin \phi )}.
\end{align*}

After some calculations, we can reformulate Maxwell's euqations \eqref{eq_Maxwell} as a system of equations only for the $z$-component of the electric and magnetic fields \cite{GinMin16}. The interior fields $\cei (\bm x)$ and $\chii (\bm x), \ \bm x \in \Omega_1 : = \Omega$ and the exterior fields $\cee (\bm x)$ and $\che (\bm x),$ $\bm x \in \Omega_0  : = \R^2 \setminus \Omega$ satisfy the Helmholtz equations 
\begin{equation}\label{maxwell_third}
\begin{aligned}
\Delta \cei + \kappa^2_1 \,\cei = 0, \quad 
\Delta \chii + \kappa^2_1 \,\chii = 0, \quad \bm x &\in \Omega_1 , \\
\Delta \cee + \kappa^2_0 \,\cee = 0, \quad 
\Delta \che + \kappa^2_0 \,\che = 0, \quad \bm x &\in  \Omega_0 ,
\end{aligned}
\end{equation}
where $\kappa^2_1 = \mu_1 \epsilon_1 \omega^2 - \beta^2 .$ Here, we assume $\mu_1 \epsilon_1 >  \mu_0 \epsilon_0\cos^2\theta$ in order to have $\kappa^2_1 >0.$

The transmission conditions \eqref{bound_cond} can also be written only for the $z$-component of the fields. Let $(\n , \ta)$ be a local coordinate system, where $\n = (n_1 , n_2)$ is the outward normal vector and $\ta = (-n_2 , n_1)$ the outward tangent vector on $\Gamma .$ 
We define $\tfrac{\partial}{\partial n } = \n \cdot \nabla_t,$ $\tfrac{\partial}{\partial \tau } = \ta \cdot \nabla_t ,$ where $\nabla_t = \textbf e_1 \tfrac{\partial}{\partial x}+\textbf e_2 \frac{\partial}{\partial y}$ and $\textbf e_1, \textbf e_2$ denote the unit vectors in $\R^2 .$ Then, we rewrite the boundary conditions as \cite{GinMin16}
\begin{equation}\label{boundary_third}
\begin{aligned}
\cei &= \cee , & \bm x\in \Gamma , \\
\tilde\mu_1 \omega \frac{\partial \chii}{\partial n }  + \beta_1 \frac{\partial \cei}{\partial \tau } &= \tilde\mu_0 \omega \frac{\partial \che}{\partial n }  + \beta_0 \frac{\partial \cee}{\partial \tau }, & \bm x\in \Gamma , \\
\chii &= \che , & \bm x\in \Gamma , \\
\tilde\epsilon_1 \omega \frac{\partial \cei}{\partial n }  - \beta_1 \frac{\partial \chii}{\partial \tau } &= \tilde\epsilon_0 \omega \frac{\partial \cee}{\partial n }  - \beta_0 \frac{\partial \che}{\partial \tau }, & \bm x\in \Gamma ,
\end{aligned}
\end{equation}
where $\tilde\mu_j = \mu_j / \kappa_j^2 , \, \tilde\epsilon_j = \epsilon_j / \kappa_j^2 , \, \beta_j = \beta / \kappa_j^2 ,$ for $j=0,1.$ The exterior fields are decomposed to $\cee = e_3^{sc} + e_3^{inc}$ and $\che = h_3^{sc} + h_3^{inc},$ where $e_3^{sc}$ and $h_3^{sc}$ denote the scattered electric and magnetic field, respectively. From \eqref{eq_incident3} we see that 
\begin{equation}\label{incident_el}
\begin{aligned}
e_3^{inc} (\bm x) = \frac1{\sqrt{\epsilon_0}} \sin \theta \, e^{i\kappa_0 (x \cos \phi +y \sin \phi )}, \quad
h_3^{inc} (\bm x) = 0.
\end{aligned}
\end{equation}

To ensure that the scattered fields are outgoing, we impose in addition the radiation conditions in $\R^2:$
\begin{equation}\label{radiation}
\begin{aligned}
\lim_{r \rightarrow \infty} \sqrt{r} \left( \frac{\partial e_3^{sc}}{\partial r} - i\kappa_0 e_3^{sc} \right) =0 , \quad
 \lim_{r \rightarrow \infty} \sqrt{r} \left( \frac{\partial h_3^{sc}}{\partial r} - i\kappa_0 h_3^{sc} \right) =0 , \\
\end{aligned}
\end{equation}   
where $r = |\bm x |,$ uniformly over all directions.

Now we are in position to formulate the direct transmission problem for oblique incident wave: Find the fields $\chii , h_3^{sc}, \cei$ and $e_3^{sc}$ that satisfy the Helmholtz equations \eqref{maxwell_third}, the transmission conditions \eqref{boundary_third} and the radiation conditions \eqref{radiation}.

\begin{theorem}\label{theo32}
If $\kappa_1^2$ is not an interior Dirichlet eigenvalue and $\kappa_0^2$ is not an interior Dirichlet and Neumann eigenvalue, then the direct transmission problem \eqref{maxwell_third} -- \eqref{radiation} admits a unique solution.
\end{theorem}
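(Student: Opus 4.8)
The statement is a well-posedness result for a scalar transmission problem, so the natural plan is to prove uniqueness and existence separately, the only non-standard feature being the tangential derivatives that couple $\cei$ and $\chii$ in \eqref{boundary_third}.

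\emph{Uniqueness.} I would take the incident field to be zero and show the solution vanishes. Since $\kappa_0=k_0\sin\theta$ and $\kappa_1$ are real and positive, Green's first identity applied to $\cei,\chii$ on $\Omega_1$ and to the scattered fields $e_3^{sc},h_3^{sc}$ on $\Omega_0\cap B_R$ (with $B_R$ a large disc) gives boundary integrals over $\Gamma$ plus an integral over $\partial B_R$, in which every volume term $\int(\abs{\nabla\cdot}^2-\kappa_j^2\abs{\cdot}^2)$ is real. The crucial manipulation is to weight the $\cei$-identity by $\omega\tilde\epsilon_1$ and the $\chii$-identity by $\omega\tilde\mu_1$ and to pair them with \eqref{boundary_third}: multiplying the fourth condition by $\overline{\cei}$ and the second by $\overline{\chii}$, integrating over $\Gamma$ and adding, the normal-derivative terms reproduce exactly these weighted interior fluxes, while the tangential-derivative terms, after integration by parts along the \emph{closed} curve $\Gamma$, collapse into purely real quantities of the form $2\beta_j\,\Re\int_\Gamma \overline{\chii}\,\partial_\tau\cei$. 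Taking imaginary parts therefore eliminates the tangential coupling and equates the weighted interior flux with the weighted exterior flux through $\Gamma$. Using the exterior identity to transport this balance to $\partial B_R$ and recalling that $\tilde\epsilon_0,\tilde\mu_0>0$, I find that a nonnegative combination of $\Im\int_{\partial B_R}\overline{e_3^{sc}}\,\partial_r e_3^{sc}$ and $\Im\int_{\partial B_R}\overline{h_3^{sc}}\,\partial_r h_3^{sc}$ vanishes. As $R\to\infty$, the radiation conditions \eqref{radiation} make each term a nonnegative multiple of the squared $L^2$-norm of the corresponding far-field pattern, so both patterns vanish; by Rellich's lemma and unique continuation, $e_3^{sc}=h_3^{sc}=0$ in $\Omega_0$. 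The continuity conditions then leave $\cei,\chii$ with zero Dirichlet data on $\Gamma$, and since $\kappa_1^2$ is not an interior Dirichlet eigenvalue they vanish in $\Omega_1$.

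\emph{Existence.} Following the indirect approach, I would represent $e_3^{sc},h_3^{sc}$ as single-layer potentials over $\Gamma$ with wavenumber $\kappa_0$ and densities $\varphi_1,\varphi_2$, and $\cei,\chii$ as single-layer potentials with wavenumber $\kappa_1$ and densities $\psi_1,\psi_2$; these automatically satisfy \eqref{maxwell_third} and, in the exterior, \eqref{radiation}. Substituting into the four conditions \eqref{boundary_third} and using the jump relations (continuity of the single-layer potential and of its tangential derivative, and the $\mp\tfrac12 I$ jump of the normal derivative) produces a $4\times4$ system of boundary integral equations for $(\psi_1,\psi_2,\varphi_1,\varphi_2)$. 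I would then show this system is Fredholm of index zero in a suitable Sobolev setting: the single-layer operators $S_0,S_1$ and the adjoint double-layer operators $K_0',K_1'$ are smoothing, and the only genuinely singular contributions are the tangential operators $D_j=\partial_\tau S_j$, whose principal part is independent of $\kappa_j$, so that $D_1-D_0$ is compact and an invertible leading part can be isolated. The hypotheses that $\kappa_0^2,\kappa_1^2$ are not Dirichlet eigenvalues make $S_0,S_1$ invertible and pin down this leading part, while the additional Neumann condition on $\kappa_0^2$ excludes the interior resonance of the chosen representation. Once Fredholmness of index zero is established, the homogeneous system corresponds to a solution of the transmission problem with vanishing incident field, hence is trivial by the uniqueness above; the Fredholm alternative then yields a unique solution of the system and, through the potentials, of the direct problem.

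The main obstacle in both parts is the tangential-derivative coupling in \eqref{boundary_third}, which has no analogue in the classical scalar transmission problem. In the uniqueness argument one must verify that these terms feed only into the real part of the flux balance — this is exactly the closed-curve integration by parts above — so that they neither spoil the sign of the radiated energy nor mix the two polarizations in the imaginary part. In the existence argument the same terms generate the non-compact operators $D_j$, and the delicate step is to exhibit an invertible principal part and to confirm that all remaining operators are compact, so that the Riesz--Fredholm machinery applies; this is precisely where the Dirichlet and Neumann eigenvalue hypotheses are consumed.
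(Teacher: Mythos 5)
Your proposal is a valid strategy and reaches the right conclusion, but it is not the paper's route: the paper proves Theorem~\ref{theo32} by citing \cite[Theorem 3.2]{GinMin16}, where the solution is built by the \emph{direct} method (Green's representation formulas) and the whole problem is condensed into the $2\times 2$ second-kind system \eqref{eq_general1}, $(\textbf D_0+\textbf K_0)(\phi_0,\psi_0)^\top=\textbf b_0$, in which compactness of the off-diagonal part is manufactured by eliminating the Neumann data through $K_j=(NS_j\pm\tfrac12 I)^{-1}ND_j$ and, crucially, by pre-composing every tangential-derivative operator $L_j$ with the smoothing operator $S_0$; this is where the interior Neumann hypothesis on $\kappa_0^2$ is consumed. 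Your uniqueness argument (weighting by $\tilde\epsilon_1\omega$, $\tilde\mu_1\omega$, integrating the tangential terms by parts on the closed curve so they contribute only $2\beta_j\Re\int_\Gamma\overline{\chii}\,\partial_\tau\cei$, then Rellich) is correct and is essentially the standard one for this problem. Your existence argument, however, replaces the paper's reduction by an \emph{indirect} single-layer ansatz and a $4\times4$ system, and there the one genuinely delicate step is stated rather than carried out: because $\beta_0\neq\beta_1$, the zeroth-order tangential operators (your ``$D_j$'', the paper's $TS_j$ --- note the notation clash with the double-layer operator $D_j$ of \eqref{operators}) enter the principal part with \emph{mismatched} coefficients and do not cancel, so Fredholmness requires checking invertibility of the full $4\times4$ principal symbol. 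This does work --- after eliminating the two Dirichlet rows the reduced $2\times2$ symbol determinant equals $\tfrac{\omega^2}{4}(\tilde\epsilon_0+\tilde\epsilon_1)(\tilde\mu_0+\tilde\mu_1)-\tfrac14(\beta_0-\beta_1)^2$, which is a positive multiple of $\omega^2(\epsilon_0+\epsilon_1)(\mu_0+\mu_1)/(\kappa_0^2\kappa_1^2)$ --- but it is the crux of why oblique incidence is harder than normal incidence, and your sketch should include this computation. In exchange, your representation never inverts $NS_0-\tfrac12 I$, so the interior Neumann condition on $\kappa_0^2$ is not actually needed (only the Dirichlet condition on $\kappa_0^2$, to pass from vanishing exterior potentials to vanishing densities in the Fredholm-alternative step, which you should also make explicit: uniqueness of the PDE kills the fields, not yet the densities). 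In short: the paper's route buys an explicitly second-kind $2\times2$ system ready for Nystr\"om discretization at the cost of extra invertibility hypotheses; yours buys weaker hypotheses at the cost of an ellipticity computation you have not yet supplied.
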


\begin{proof}
The proof is based on the integral representation of the solution resulting to a Fredholm type system of boundary integral equations. For more details see \cite[Theorem 3.2]{GinMin16}.
\end{proof}

In the following, $j = 0,1$ counts for the exterior ($\bm x\in \Omega_0$) and interior domain ($\bm x\in \Omega_1$), respectively. We introduce the single- and double-layer potentials defined by
\begin{equation}\label{single_double}
\begin{aligned}
(\mathcal S_j f) (\bm x) &= \int_\Gamma \Phi_j (\bm x,\bm y) f(\bm y) ds (\bm y), & \bm x \in\Omega_j , \\
(\mathcal D_j f) (\bm x) &= \int_\Gamma \frac{\partial \Phi_j}{\partial n (\bm y)} (\bm x,\bm y) f(\bm y) ds (\bm y), & \bm x \in\Omega_j ,
\end{aligned}
\end{equation}
where $\Phi_j$ is the fundamental solution of the Helmholtz equation in $\R^2:$
\begin{equation}
\Phi_j (\bm x,\bm y) = \frac{i}4 H_0^{(1)} (\kappa_j |\bm x-\bm y|), \quad \bm x,\bm y \in\Omega_j , \quad \bm x \neq \bm y,  
\end{equation}
and $H_0^{(1)}$ is the Hankel function of the first kind and zero order. We define also the integral operators
\begin{equation}\label{operators}
\begin{aligned}
( S_j f) (\bm x) &= \int_\Gamma \Phi_j (\bm x,\bm y) f(\bm y) ds (\bm y), & \bm x \in\Gamma , \\
( D_j f) (\bm x) &= \int_\Gamma \frac{\partial \Phi_j}{\partial n (\bm y)} (\bm x,\bm y) f(\bm y) ds (\bm y), & \bm x \in\Gamma , \\
 (NS_j f) (\bm x) &= \int_\Gamma \frac{\partial \Phi_j}{\partial n (\bm x)} (\bm x,\bm y) f(\bm y) ds (\bm y)   , & \bm x \in\Gamma, \\
(ND_j f) (\bm x)  &= \int_\Gamma \frac{\partial^2 \Phi_j}{\partial n (\bm x) \partial n (\bm y)} (\bm x,\bm y) f(\bm y) ds (\bm y), & \bm x \in\Gamma, \\
(TS_j f) (\bm x) &= \int_\Gamma \frac{\partial \Phi_j}{\partial \tau (\bm x)} (\bm x,\bm y) f(\bm y) ds (\bm y) , & \bm x \in\Gamma, \\
(TD_j f) (\bm x) &= \int_\Gamma \frac{\partial^2 \Phi_j}{\partial \tau (\bm x) \partial n (\bm y)} (\bm x,\bm y) f(\bm y) ds (\bm y) , & \bm x \in\Gamma .
\end{aligned}
\end{equation}

The following theorem was proven in \cite{GinMin16}.
\begin{theorem}
Let the assumptions of Theorem~\ref{theo32} still hold. Then, the potentials
\begin{equation}\label{potentials}
\begin{aligned}
\cei (\bm x) &= -(\mathcal D_1 \phi_1 ) (\bm x) + (\mathcal S_1 \eta_1 ) (\bm x), & \bm x\in \Omega_1 , \\
\chii (\bm x) &= -(\mathcal D_1 \psi_1 ) (\bm x) + (\mathcal S_1 \xi_1 ) (\bm x), & \bm x\in \Omega_1 , \\
\cee (\bm x) &= (\mathcal D_0 \phi_0 ) (\bm x) - (\mathcal S_0 \eta_0 ) (\bm x), & \bm x\in \Omega_0 , \\
\che  (\bm x) &= (\mathcal D_0 \psi_0 ) (\bm x) - (\mathcal S_0 \xi_0 ) (\bm x), & \bm x\in \Omega_0 ,
\end{aligned}
\end{equation}
solve the direct transmission problem \eqref{maxwell_third} -- \eqref{radiation} provided that the densities $\phi_0 \in H^{1/2} (\Gamma)$ and $\psi_0 \in H^{1/2} (\Gamma)$ satisfy the system of integral equations
\begin{equation}\label{eq_general1}
(\textbf D_0 + \textbf K_0 ) \begin{pmatrix}
\phi_0 \\ \psi_0
\end{pmatrix} = \textbf b_0 ,
\end{equation}
where
\begin{align*}
\textbf D_0 &= \begin{pmatrix}
D_0 -\tfrac12 I & 0 \\ 0 & D_0 -\tfrac12 I
\end{pmatrix},  &
\textbf K_0 &= \begin{pmatrix}
- \tfrac{\tilde\epsilon_1}{\tilde\epsilon_0} S_0 K_1 & - \frac1{\tilde\epsilon_0 \omega} S_0 (\beta_1 L_1 + \beta_0 L_0 )  \\ \frac{1}{\tilde\mu_0 \omega} S_0 (\beta_1 L_1 + \beta_0 L_0 )  & - \frac{\tilde\mu_1}{\tilde\mu_0} S_0 K_1
\end{pmatrix}, \\
\textbf b_0 &= \begin{pmatrix}
- S_0 \partial_\eta  + \frac{\tilde\epsilon_1}{\tilde\epsilon_0} S_0 K_1 \\ -  \frac1{\tilde\mu_0 \omega} S_0 ( \beta_0 \partial_\tau  + \beta_1 L_1 )
\end{pmatrix} e^{inc}_3  ,
\end{align*}
and 
$
K_j := \left( NS_j \pm \tfrac12 I\right)^{-1} ND_j , \, L_j : = 2 (TD_j -  TS_j K_j ) .
$
The rest of the densities satisfy $\phi_1 = \phi_0 + e^{inc}_3 ,\, \psi_1 = \psi
_0, \, \eta_j = K_j \phi_j$ and $\xi_j = K_j \psi_j .$

\end{theorem}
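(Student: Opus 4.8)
The statement is a verification result, so the plan is to confirm the three groups of requirements \eqref{maxwell_third}, \eqref{boundary_third}, \eqref{radiation} in turn and to show that, once the continuity of the traces has been used, the two remaining boundary conditions collapse to \eqref{eq_general1}. The first two groups cost nothing: each of the four potentials in \eqref{potentials} is a superposition of the layer potentials \eqref{single_double} built from the fundamental solution $\Phi_j$, hence it automatically solves the Helmholtz equation with wave number $\kappa_j$ in $\Omega_j$; moreover $\Phi_0$ satisfies the Sommerfeld condition with wave number $\kappa_0$, a property inherited by $\mathcal S_0$ and $\mathcal D_0$, so \eqref{radiation} holds for $e_3^{sc}$ and $h_3^{sc}$. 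Thus the whole content of the theorem lies in the transmission conditions \eqref{boundary_third}.

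Next I would take all four potentials to the boundary through the standard jump relations: the single layer is continuous with trace $S_j$ and jumping normal derivative $NS_j\mp\tfrac12 I$, the double layer has the Dirichlet jump $D_j\pm\tfrac12 I$ with continuous normal derivative $ND_j$, and the tangential derivatives produce $TS_j$ and $TD_j$. The pivotal remark is that the choice $\eta_j=K_j\phi_j$, $\xi_j=K_j\psi_j$ with $K_j=(NS_j\pm\tfrac12 I)^{-1}ND_j$ is exactly the interior/exterior Dirichlet-to-Neumann map: it makes the Neumann trace of each field equal to $K_j$ applied to its double-layer density, and, through a Calderón identity of the form $S_jK_j=D_j\pm\tfrac12 I$, it collapses the Dirichlet trace of $\mp\mathcal D_j\phi_j\pm\mathcal S_j\eta_j$ to $\phi_j$ itself (and likewise to $\psi_j$ for the magnetic field). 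Consequently the first and third conditions of \eqref{boundary_third}, namely $\cei=\cee$ and $\chii=\che$, reduce to $\phi_1=\phi_0+e_3^{inc}$ and $\psi_1=\psi_0$, the latter using $h_3^{inc}=0$ from \eqref{incident_el}; these are precisely the relations claimed for the remaining densities.

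It then remains to insert the Neumann traces $K_j\phi_j$, $K_j\psi_j$ and the tangential traces, gathered through $L_j:=2(TD_j-TS_jK_j)$, into the second and fourth conditions of \eqref{boundary_third}. After eliminating $\phi_1,\psi_1$ and $\eta_j,\xi_j$ by the relations just found and composing each equation with $S_0$ on the left---so that the exterior normal-derivative term $S_0\eta_0$ becomes $(D_0-\tfrac12 I)\phi_0$ by the same Calderón identity---the two conditions turn into the coupled pair $(\textbf{D}_0+\textbf{K}_0)(\phi_0,\psi_0)^\top=\textbf{b}_0$, with all incident-field contributions (the terms in $K_1e_3^{inc}$, $\partial_n e_3^{inc}$ and the tangential derivative of $e_3^{inc}$) collected into $\textbf{b}_0$.

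I expect the main obstacle to be the rigorous treatment of the tangential-derivative jump relations for the double layer: unlike the normal derivative, $\partial_\tau\mathcal D_j$ is discontinuous across $\Gamma$, while $ND_j$ and $TD_j$ are hypersingular and only bounded $H^{1/2}(\Gamma)\to H^{-1/2}(\Gamma)$, so one must justify differentiating under the integral and keep track of every $\pm\tfrac12$ jump term together with all the material factors $\tilde\mu_j,\tilde\epsilon_j,\beta_j$ so that they assemble into exactly $\textbf{D}_0$, $\textbf{K}_0$ and $\textbf{b}_0$. A secondary point is the well-definedness of $K_j$, which requires the invertibility of $NS_j\pm\tfrac12 I$; this is where the spectral hypotheses of Theorem~\ref{theo32}, that $\kappa_1^2$ and $\kappa_0^2$ avoid the relevant interior Dirichlet and Neumann eigenvalues, enter the argument.
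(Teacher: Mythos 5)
The paper contains no proof of this theorem at all---it is stated with only the remark that it ``was proven in \cite{GinMin16}''---so there is nothing internal to compare against; judged on its own, your reconstruction is correct and is evidently the intended argument: the densities are the Cauchy data of the fields, $K_j$ is the Dirichlet-to-Neumann map realized as $(NS_j\pm\tfrac12 I)^{-1}ND_j$, the Helmholtz and radiation conditions hold automatically for the layer potentials, and the transmission conditions \eqref{boundary_third} pass via the jump relations and the Calder\'on identities ($S_jNS_j=D_jS_j$, $S_jND_j=D_j^2-\tfrac14 I$, whence $S_jK_j=D_j\pm\tfrac12 I$) into \eqref{eq_general1}. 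One point to make explicit in a full write-up: the theorem asserts the implication \emph{from} \eqref{eq_general1} \emph{to} \eqref{boundary_third}, and since \eqref{eq_general1} is the second and fourth transmission conditions composed with $S_0$, undoing that composition requires the injectivity of $S_0$---this is an additional place where the hypothesis that $\kappa_0^2$ is not an interior Dirichlet eigenvalue enters, beyond the role you assign to the spectral assumptions (well-definedness of $K_j$).
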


The solutions $e_3^{sc}$ and $h_3^{sc} $ of  \eqref{maxwell_third} -- \eqref{radiation} have the asymptotic behavior
\begin{equation}\label{far}
e_3^{sc}  (\bm x ) = \frac{e^{i\kappa_0 r }}{\sqrt{r}} e^\infty (\bm{\hat{x}}) + \mathcal{O} ( r^{-3/2}) , \quad h_3^{sc}  (\bm x ) = \frac{e^{i\kappa_0 r }}{\sqrt{r}} h^\infty (\bm{\hat{x}}) + \mathcal{O} (r^{-3/2}),
\end{equation}
where $\bm{\hat{x}} = \bm x / |\bm x|.$ The pair $(e^\infty, h^\infty)$ is called the far-field pattern corresponding to the scattering problem  \eqref{maxwell_third} -- \eqref{radiation}. Its knowledge is essential for the inverse problem and using \eqref{potentials} we can compute it by
\begin{equation}
\begin{aligned}
e^\infty (\bm{\hat{x}})  &= ( D^\infty \phi_0 ) (\bm{\hat{x}}) - ( S^\infty \eta_0 ) (\bm{\hat{x}}), & \bm{\hat{x}} \in \SQ ,\\
h^\infty (\bm{\hat{x}})  &= ( D^\infty \psi_0 ) (\bm{\hat{x}}) - ( S^\infty \xi_0 ) (\bm{\hat{x}}), & \bm{\hat{x}} \in \SQ ,\\
\end{aligned}
\end{equation}
where $\SQ$ is the unit ball. The far-field operators are given by
\begin{equation}\label{single_double_far}
\begin{aligned}
( S^\infty f) (\bm{\hat{x}}) &= \int_\Gamma \Phi^\infty (\bm{\hat{x}},\bm y) f(\bm y) ds (\bm y), & \bm{\hat{x}} \in \SQ , \\
( D^\infty f) (\bm{\hat{x}}) &= \int_\Gamma \frac{\partial \Phi^\infty}{\partial n (\bm y)} (\bm{\hat{x}},\bm y) f(\bm y) ds (\bm y), & \bm{\hat{x}} \in \SQ ,
\end{aligned}
\end{equation}
where $\Phi^\infty$ is the far-field of the Green function $\Phi ,$ given by
\[
\Phi^\infty (\bm{\hat{x}},\bm y)  = \frac{e^{i\pi /4}}{\sqrt{8\pi \kappa_0}}  e^{-i\kappa_0 \bm{\hat{x}} \cdot \bm y}.
\]

  \section{The inverse problem}\label{inverse}
  
 The inverse scattering problem, we address here, reads: Find the shape and the position of the inclusion $\Omega,$ meaning reconstruct its boundary $\Gamma$, given the far-field patterns $(e^\infty (\bm{\hat{x}}),  h^\infty (\bm{\hat{x}})),$ for all $\bm{\hat{x}} \in \SQ,$ for one or few incident fields \eqref{incident_el}.

  \subsection{The integral equation method} 

To solve the inverse problem we apply the method of nonlinear boundary integral equations, which in our case results to a system of four integral equations on the unknown boundary and one on the unit circle where the far-field data are defined. This method was
 first introduced in \cite{KreRun05} and further considered in various inverse problems, see for instance \cite{CakKre07, CakKreSchu10, ChaGinMin16, GinMin11, IvaJoh08}. Since the direct problem was solved with the direct method (Green's formulas), in order to obtain our numerical data, here we adopt a different approach based on the indirect integral equation method, using simple representations for the fields.  

We assume a double-layer representation for the interior fields and a single-layer representation for the exterior fields. Thus, we set
\begin{equation}\label{fields_inverse}
\begin{aligned}
\cei (\bm x) &= \frac1{\tilde\epsilon_1} (\mathcal D_1 \phi_e )(\bm x),  & \chii (\bm x) &= \frac1{\tilde\mu_1} (\mathcal D_1 \phi_h )(\bm x), & \bm x \in \Omega_1 , \\
e_3^{sc} (\bm x) &= \frac1{\tilde\epsilon_0} (\mathcal S_0 \psi_e )(\bm x),  & h_3^{sc} (\bm x) &= \frac1{\tilde\mu_0} (\mathcal S_0 \psi_h )(\bm x), & \bm x \in \Omega_0 .
\end{aligned}
\end{equation}

Substituting the above representations in the transmission conditions \eqref{boundary_third} and considering the well-known jump relations, we get the system of integral equations
\begin{equation}\label{boundary_inverse}
\begin{aligned}
\frac1{\tilde\epsilon_1} \left(D_1 -\frac12 \right) \phi_e - \frac1{\tilde\epsilon_0} S_0 \psi_e  &= e^{inc}_3 , & \mbox{on } \Gamma,\\
\omega ND_1 \phi_h + \frac{\beta_1}{\tilde\epsilon_1} \left( TD_1 -  \frac12 \frac{\partial}{\partial \tau} \right) \phi_e - \omega\left( NS_0 -\frac12 \right) \psi_h - \frac{\beta_0}{\tilde\epsilon_0} TS_0 \psi_e &= \beta_0 \frac{\partial e^{inc}_3}{\partial \tau }, & \mbox{on } \Gamma,\\
\frac1{\tilde\mu_1} \left(D_1 -\frac12 \right) \phi_h - \frac1{\tilde\mu_0} S_0 \psi_h  &= 0, & \mbox{on } \Gamma,\\
\omega ND_1 \phi_e - \frac{\beta_1}{\tilde\mu_1} \left( TD_1 -  \frac12 \frac{\partial}{\partial \tau} \right) \phi_h - \omega \left( NS_0 -\frac12 \right) \psi_e +  \frac{\beta_0}{\tilde\mu_0} TS_0 \psi_h &= \tilde\epsilon_0 \omega \frac{\partial e^{inc}_3}{\partial n }, & \mbox{on } \Gamma.
\end{aligned}
\end{equation}

In addition, given the far-field operators \eqref{single_double_far} and the representations \eqref{fields_inverse} of the exterior fields we see that the unknown boundary $\Gamma$ and the densities $\psi_e$ and $\psi_h$ satisfy also the far-field equations
\begin{subequations}\label{far_inverse}
\begin{align}
\frac1{\tilde\epsilon_0} S^\infty \psi_e &= e^\infty , \quad \mbox{on } \SQ , \label{far_inverse1} \\
\frac1{\tilde\mu_0} S^\infty \psi_h &= h^\infty , \quad \mbox{on } \SQ , \label{far_inverse2}
\end{align}
\end{subequations}
  where the right-hand sides are the known far-field patterns from the direct problem. The equation \eqref{boundary_inverse} in matrix form reads
  \begin{equation}\label{equation_boundary}
 ( \textbf T + \textbf K ) \bm\phi  = \textbf b ,
  \end{equation}
  where
  \begin{align*}
  \textbf T &= \begin{pmatrix}
  \dfrac{\omega}2 & \dfrac{\beta_1}{2\tilde\mu_1}\partial_\tau & 0 & 0\\[10pt]
  0 & -\dfrac1{2\tilde\mu_1} & 0 & 0 \\
  0 & 0 & \dfrac{\omega}2 & -\dfrac{\beta_1}{2\tilde\epsilon_1}\partial_\tau \\[10pt]
  0 & 0 & 0 & -\dfrac1{2\tilde\epsilon_1}
\end{pmatrix}  , &
  \textbf K &= \begin{pmatrix}
 -\omega NS_0 & - \dfrac{\beta_1}{\tilde\mu_1} TD_1 & \dfrac{\beta_0}{\tilde\mu_0} TS_0 & \omega ND_1 \\[10pt]
 0 & \dfrac1{\tilde\mu_1} D_1 &  -\dfrac1{\tilde\mu_0} S_0 & 0 \\
 -\dfrac{\beta_0}{\tilde\epsilon_0} TS_0 & \omega ND_1 & -\omega NS_0 & \dfrac{\beta_1}{\tilde\epsilon_1} TD_1 \\[10pt]
 -\dfrac1{\tilde\epsilon_0} S_0 & 0 & 0 &  \dfrac1{\tilde\epsilon_1} D_1  
\end{pmatrix} , \\
\bm\phi &= \begin{pmatrix}
\psi_e \\ \phi_h \\ \psi_h \\ \phi_e
\end{pmatrix}, & \textbf b &= \begin{pmatrix}
\tilde\epsilon_0 \omega \partial_n \\ 0 \\ \beta_0 \partial_\tau \\ 1
\end{pmatrix} e^{inc}_3 .
  \end{align*}
  
  The matrix $\textbf T$ due to its special form and the boundness of $\partial_\tau : H^{1/2} (\Gamma) \rightarrow H^{-1/2} (\Gamma)$ has a bounded inverse given by
  \begin{equation}
   \textbf T^{-1} = \begin{pmatrix}
  \dfrac2{\omega} & \dfrac{2\beta_1}{\omega}\partial_\tau & 0 & 0\\[10pt]
  0 & -2\tilde\mu_1 & 0 & 0 \\[6pt]
  0 & 0 & \dfrac2{\omega} & -\dfrac{2\beta_1}{\omega}\partial_\tau \\[10pt]
  0 & 0 & 0 & -2\tilde\epsilon_1
\end{pmatrix} .
  \end{equation}
  
  Then, equation \eqref{equation_boundary} takes the form
  \begin{equation}\label{inverse_final}
  ( \textbf I + \textbf C ) \bm\phi  = \textbf g ,
  \end{equation}
where now $\textbf I$ is the identity matrix and
\begin{align*}
\textbf C &=  \textbf T^{-1} \textbf K = \begin{pmatrix}
-2 NS_0 & 0 & \dfrac{2}{\omega \tilde\mu_0} (\beta_0 - \beta_1) TS_0 & 2ND_1 \\
0 & -2 D_1 & 2 \dfrac{\tilde\mu_1}{\tilde\mu_0} S_0 & 0\\
-\dfrac{2}{\omega \tilde\epsilon_0} (\beta_0 - \beta_1) TS_0 & 2ND_1 & -2NS_0 & 0 \\
2 \dfrac{\tilde\epsilon_1}{\tilde\epsilon_0} S_0 & 0 & 0 & -2D_1
\end{pmatrix}, \\
\textbf g &= \textbf T^{-1} \textbf b = \begin{pmatrix}
2\tilde\epsilon_0  \partial_n \\ 0 \\ \dfrac{2}{\omega}(\beta_0 - \beta_1 ) \partial_\tau \\ -2 \tilde\epsilon_1
\end{pmatrix} e^{inc}_3 .
\end{align*}  

Using the mapping properties of the integral operators \cite{Kre99}, we see that the operator $\textbf C : (H^{-1/2} (\Gamma) \times H^{1/2} (\Gamma))^2  \rightarrow (H^{-3/2} (\Gamma) \times H^{-1/2} (\Gamma))^2 $ is compact.
  
We observe that we have six equations \eqref{inverse_final} and \eqref{far_inverse} for the five unknowns: $\Gamma$ and the four densities. Thus, we consider the linear combination $\tilde\epsilon_0 \cdot$\eqref{far_inverse1} + $\tilde\mu_0 \cdot$\eqref{far_inverse2} as a replacement for the far-field equations in order to state the following theorem as a formulation of the inverse problem.

\begin{theorem}
Given the incident field \eqref{incident_el} and the far-field patterns $(e^\infty (\bm{\hat{x}}),  h^\infty (\bm{\hat{x}})),$ for all $\bm{\hat{x}} \in \SQ,$ if the boundary $\Gamma$ and the densities $\psi_e ,\, \phi_h ,\, \psi_h$ and $\phi_e$ satisfy the system of equations
\begin{subequations}\label{final_system}
\begin{align}
\psi_e-2 NS_0 \psi_e  + \dfrac{2}{\omega \tilde\mu_0} (\beta_0 - \beta_1) TS_0 \psi_h + 2ND_1 \phi_e &= 2\tilde\epsilon_0  \partial_n e^{inc}_3 \label{final_system1},\\
\phi_h -2 D_1 \phi_h + 2 \dfrac{\tilde\mu_1}{\tilde\mu_0} S_0 \psi_h &= 0 \label{final_system2},\\
-\dfrac{2}{\omega \tilde\epsilon_0} (\beta_0 - \beta_1) TS_0 \psi_e + 2ND_1 \phi_h + \psi_h -2NS_0 \psi_h &= \dfrac{2}{\omega}(\beta_0 - \beta_1 ) \partial_\tau e^{inc}_3 \label{final_system3},\\
2 \dfrac{\tilde\epsilon_1}{\tilde\epsilon_0} S_0 \psi_e + \phi_e -2D_1  \phi_e &=  -2 \tilde\epsilon_1 e^{inc}_3 \label{final_system4},\\
S^\infty \psi_e + S^\infty \psi_h&= \tilde\epsilon_0 e^\infty + \tilde\mu_0 h^\infty ,  \label{final_system5}
\end{align}
\end{subequations}
then, $\Gamma$ solves the inverse problem.
\end{theorem}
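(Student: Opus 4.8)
The plan is to reverse the derivation that produced \eqref{final_system} and then appeal to the uniqueness of the direct problem. First I would take the four densities $\psi_e,\phi_h,\psi_h,\phi_e$ coming from a solution of the system and use them to build candidate fields $\cei,\chii,e_3^{sc},h_3^{sc}$ through the representations \eqref{fields_inverse}. Since single- and double-layer potentials with admissible densities solve the Helmholtz equation in $\Omega_0$ and $\Omega_1$, the interior/exterior equations \eqref{maxwell_third} hold automatically, and the exterior single-layer potentials satisfy the radiation conditions \eqref{radiation} by construction. Thus only the transmission conditions and the far-field identity remain to be verified.

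For the transmission conditions I would argue that \eqref{final_system1}--\eqref{final_system4} are equivalent to \eqref{boundary_third}. Substituting \eqref{fields_inverse} into \eqref{boundary_third} and applying the jump relations for the traces of the layer potentials reproduces precisely the system \eqref{boundary_inverse}, i.e. \eqref{equation_boundary}. Because the triangular matrix $\textbf T$ has the explicit bounded inverse $\textbf T^{-1}$ displayed above, multiplication by $\textbf T^{-1}$ is reversible, so \eqref{equation_boundary} and \eqref{inverse_final} share the same solutions, and rows one through four of \eqref{inverse_final} are exactly \eqref{final_system1}--\eqref{final_system4}. Hence the candidate fields satisfy \eqref{boundary_third}, and together with the previous paragraph they solve the direct transmission problem \eqref{maxwell_third}--\eqref{radiation} for the curve $\Gamma$. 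By the uniqueness statement of Theorem~\ref{theo32}, these fields must coincide with the unique solution of the direct problem associated with $\Gamma$.

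It then remains to identify the far-field pattern. By \eqref{far} and the far-field operators \eqref{single_double_far}, the scattered fields $e_3^{sc}=\tilde\epsilon_0^{-1}\mathcal S_0\psi_e$ and $h_3^{sc}=\tilde\mu_0^{-1}\mathcal S_0\psi_h$ have far-field patterns $\tilde\epsilon_0^{-1}S^\infty\psi_e$ and $\tilde\mu_0^{-1}S^\infty\psi_h$, which are therefore the patterns radiated by $\Gamma$. Equation \eqref{final_system5} states exactly that $S^\infty\psi_e+S^\infty\psi_h$ reproduces the measured combination $\tilde\epsilon_0 e^\infty+\tilde\mu_0 h^\infty$, so the field radiated by $\Gamma$ matches the data in this combined sense and $\Gamma$ solves the inverse problem.

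The step I expect to be most delicate is the last one. Because the two far-field equations \eqref{far_inverse1}--\eqref{far_inverse2} were replaced by the single linear combination \eqref{final_system5}, a solution of \eqref{final_system} a priori only guarantees that the weighted sum of the computed patterns equals $\tilde\epsilon_0 e^\infty+\tilde\mu_0 h^\infty$, not that $\tilde\epsilon_0^{-1}S^\infty\psi_e=e^\infty$ and $\tilde\mu_0^{-1}S^\infty\psi_h=h^\infty$ hold separately. I would therefore either state the conclusion in terms of this combined far-field, or add an argument that exploits the fact that $\psi_e$ and $\psi_h$ are not independent but are coupled to the same geometry through \eqref{final_system1}--\eqref{final_system4}, in order to recover the two patterns individually; making this reduction precise is the crux of turning the formal derivation into a rigorous sufficiency proof.
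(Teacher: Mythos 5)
Your proposal is correct and follows essentially the same route as the paper, which states this theorem without a separate proof as the capstone of the preceding derivation: the representations \eqref{fields_inverse}, the jump relations yielding \eqref{boundary_inverse}, the inversion of $\textbf T$ to reach \eqref{inverse_final}, and the combination of the far-field equations. The delicacy you flag about replacing \eqref{far_inverse1}--\eqref{far_inverse2} by the single linear combination \eqref{final_system5} is a genuine observation, but the paper leaves it equally unaddressed, introducing the combination only to match the number of equations to the number of unknowns.
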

  
The integral operators in \eqref{final_system} are linear with respect to the densities but non-linear with respect to the unknown boundary $\Gamma.$ The smoothness of the kernels in the far-field equation \eqref{final_system5} reflects the ill-posedness of the inverse problem.

To solve the above system of equations, we consider the method first introduced in \cite{JohSle07} and then applied in different problems, see for instance \cite{AltKre12a, ChaGinMin16, Lee15}. More precisely, given an initial approximation for the boundary $\Gamma$, we solve the subsystem \eqref{final_system1} - \eqref{final_system4} for the densities 
$\psi_e ,\, \phi_h ,\, \psi_h$ and $\phi_e .$ Then, keeping the densities $\psi_e$ and $\psi_h$ fixed we linearize the far-field equation \eqref{final_system5} with respect to the boundary. The linearized equation is solved to obtain the update for the boundary. The linearization is performed using Fr\'echet derivatives of the operators and we also regularize the ill-posed last equation.
  
To present the proposed method in details, we consider the following parametrization for the boundary
\[
\Gamma = \{ \bm z (t) =  r (t) (\cos t,\, \sin t) : t \in [0,2\pi]\},
\]
where $\bm z : \R \rightarrow \R^2$ is a $C^2$-smooth, $2\pi$-periodic, injective in $[0,2\pi),$ meaning that $\bm z'  (t) \neq 0,$ for all $t\in [0,2\pi].$ The non-negative function $r$ represents the radial distance of $\Gamma$ from the origin. Then, we define
\[
\zeta_e (t) = \psi_e (\bm z (t)), \quad   \zeta_h (t) = \psi_h (\bm z (t)), \quad \xi_e  (t) = \phi_e (\bm z (t)), \quad \xi_h  (t) = \phi_h (\bm z (t)), \quad t \in [0,2\pi]
\]
and the parametrized form of \eqref{final_system} is given by
\begin{equation}\label{eqFinal}
\begin{pmatrix}
\bb{A}_1  \\
\bb{A}_2\\
\bb{A}_3 \\
\bb{A}_4\\
\bb{A}_5 \end{pmatrix} (r; \zeta_e) + \begin{pmatrix}
\bb{B}_1  \\
\bb{B}_2\\
\bb{B}_3 \\
\bb{B}_4\\
\bb{B}_5\end{pmatrix} (r; \xi_h ) + \begin{pmatrix}
\bb{C}_1  \\
\bb{C}_2\\
\bb{C}_3 \\
\bb{C}_4\\
\bb{C}_5\end{pmatrix} (r; \zeta_h  ) + \begin{pmatrix}
\bb{D}_1  \\
\bb{D}_2\\
\bb{D}_3 \\
\bb{D}_4\\
\bb{D}_5\end{pmatrix} (r; \xi_e ) = \begin{pmatrix}
\bb{F}_1   \\
\bb{F}_2 \\
\bb{F}_3 \\
\bb{F}_4\\
\bb{F}_5\end{pmatrix} , 
\end{equation}
with the parametrized operators
\begin{align*}
(\bb{A}_1 (r; \zeta ))(t) &= (\bb{C}_3 (r; \zeta ))(t) = \zeta (t) - 2 \int_0^{2\pi} M^{NS_0} (t,s) \zeta (s) ds ,\\
(\bb{A}_3 (r; \zeta ))(t) &= -\frac{\tilde\mu_0}{\tilde\epsilon_0} (\bb{C}_1 (r; \zeta ))(t) = -\frac{2}{\omega \tilde\epsilon_0} (\beta_0 - \beta_1) \int_0^{2\pi} M^{TS_0} (t,s) \zeta (s) ds,\\
(\bb{A}_4 (r; \zeta ))(t) &= \frac{\tilde\mu_0 \tilde\epsilon_1}{\tilde\mu_1 \tilde\epsilon_0}(\bb{C}_2 (r; \zeta ))(t) = 2 \frac{\tilde\epsilon_1}{\tilde\epsilon_0} \int_0^{2\pi} M^{S_0} (t,s) \zeta (s) ds,\\
(\bb{A}_5 (r; \zeta ))(t) &= (\bb{C}_5 (r; \zeta ))(t) = \int_0^{2\pi} \Phi^\infty (\bm{\hat{z}}(t),\bm z (s)) \zeta (s)  |\bm z'  (s)| ds,\\
(\bb{B}_2 (r; \xi ))(t) &= (\bb{D}_4 (r; \xi ))(t) = \xi (t) - 2 \int_0^{2\pi} M^{D_1} (t,s) \xi (s) ds,\\
(\bb{B}_3 (r; \xi ))(t) &= (\bb{D}_1 (r; \xi ))(t) = 2 \int_0^{2\pi} M^{ND_1} (t,s) \xi (s) ds,\\
\end{align*}
and the right-hand side
\begin{align*}
(\bb{F}_1 (r ))(t) &= 2\tilde\epsilon_0  \partial_n e^{inc}_3 (\bm z (t)) , &
(\bb{F}_3 (r ))(t) &=  \dfrac{2}{\omega}(\beta_0 - \beta_1 ) \partial_\tau e^{inc}_3 (\bm z (t)),\\
(\bb{F}_4 (r ))(t) &= -2 \tilde\epsilon_1 e^{inc}_3 (\bm z (t)), &
(\bb{F}_5 )(t) &= \tilde\epsilon_0 e^\infty (\bm{\hat{z}}(t)) + \tilde\mu_0 h^\infty (\bm{\hat{z}}(t)) .
\end{align*}

In addition, we set $\bb{A}_2 = \bb{B}_1 = \bb{B}_4 = \bb{B}_5 = \bb{C}_4 = \bb{D}_2 = \bb{D}_3 = \bb{D}_5 = \bb{F}_2 = 0.$ The matrix $M^{K_j}$ denotes the discretized kernel of the operator $K_j.$ The explicit forms of the kernels can be found for example in \cite[Equation 4.3]{GinMin16}. The operators $\bb{A}_k ,\, \bb{B}_k ,\, \bb{C}_k ,\, \bb{D}_k,$ $k=1,2,3,4,5$ act on the densities and the first variable $r$ shows the dependence on the unknown parametrization of the boundary. Only $\bb{F}_5$ is independent of the radial function. 

Let the function $q$ stand for the radial function of the perturbed boundary
\[
\Gamma_q = \{ \bm q (t) =  q (t) (\cos t,\, \sin t) : t \in [0,2\pi]\}.
\]
Then the iterative method reads:
\begin{iterative}\label{IterationScheme}
Let $r^{(0)}$ be an initial approximation of the radial function. Then, in the $k$th iteration step:
\begin{enumerate}[i.]
\item We assume that we know $r^{(k-1)}$ and we solve the subsystem
\begin{equation}\label{eqFinal2}
\begin{pmatrix}
\bb{A}_1  \\
\bb{A}_2\\
\bb{A}_3 \\
\bb{A}_4 \end{pmatrix} (r^{(k-1)}; \zeta_e) + \begin{pmatrix}
\bb{B}_1  \\
\bb{B}_2\\
\bb{B}_3 \\
\bb{B}_4\end{pmatrix} (r^{(k-1)}; \xi_h ) + \begin{pmatrix}
\bb{C}_1  \\
\bb{C}_2\\
\bb{C}_3 \\
\bb{C}_4\end{pmatrix} (r^{(k-1)}; \zeta_h  ) + \begin{pmatrix}
\bb{D}_1  \\
\bb{D}_2\\
\bb{D}_3 \\
\bb{D}_4\end{pmatrix} (r^{(k-1)}; \xi_e ) = \begin{pmatrix}
\bb{F}_1   \\
\bb{F}_2 \\
\bb{F}_3 \\
\bb{F}_4\end{pmatrix} , 
\end{equation}
to obtain the densities $\zeta_e^{(k)}, \, \xi_h^{(k)}, \,\zeta_h^{(k)}$ and $\xi_e^{(k)}.$
\item Keeping the densities $\zeta_e$ and $\zeta_h$ fixed, we linearize the fifth equation of \eqref{eqFinal}, namely
\begin{equation}\label{linear_eq}
\bb A_5 (r^{(k-1)}; \zeta_e^{(k)}) + (\bb A'_5 (r^{(k-1)}; \zeta_e^{(k)})) (q) + \bb C_5 (r^{(k-1)}; \zeta_h^{(k)}) + (\bb C'_5 (r^{(k-1)}; \zeta_h^{(k)})) (q) = \bb F_5. 
\end{equation}
We solve this equation for $q$ and we update the radial function $r^{(k)} = r^{(k-1)} +q.$
\end{enumerate}
The iteration stops when a suitable stopping criterion is satisfied.
\end{iterative}

\begin{remark}
In order to take advantage of the available measurement data, we can also keep the overdetermined system \eqref{boundary_inverse} and \eqref{far_inverse}
instead of \eqref{final_system5} and replace equation \eqref{linear_eq} with
\begin{equation}\label{linear_eq3}
\begin{pmatrix}
\bb A'_5 (r^{(k-1)}; \zeta_e^{(k)}) \\
\bb A'_5 (r^{(k-1)}; \zeta_h^{(k)})
\end{pmatrix} q = \begin{pmatrix}
\bb F_e \\
\bb F_h
\end{pmatrix} - \begin{pmatrix}
\bb A_5 (r^{(k-1)}; \zeta_e^{(k)}) \\
\bb A_5 (r^{(k-1)}; \zeta_h^{(k)})
\end{pmatrix},
\end{equation}
where now $\bb F_e  = \tilde\epsilon_0 \, e^\infty$ and $\bb F_h  = \tilde\mu_0 \, h^\infty.$
\end{remark}

The Fr\'echet derivatives of the operators are calculated by formally differentiating their kernels with respect to $r$
\begin{equation}\label{frechet}
((\bb A'_5 (r; \zeta)) (q))(t) = \frac{e^{i\pi /4}}{\sqrt{8\pi \kappa_0}} \int_0^{2\pi} e^{-i\kappa_0 \bm{\hat{z}}(t) \cdot \bm z(s)} \left( -i\kappa_0 \bm{\hat{z}}(t) \cdot \bm q(s) |\bm z'  (s)| + \frac{\bm z'  (s) \cdot \bm q'  (s)}{|\bm z'  (s)|} \right) \zeta (s) ds.
\end{equation}
Recall that $\bb A_5 =\bb C_5 = S^\infty.$ If $\kappa_0^2$ is not an interior Neumann eigenvalue, then the operator $\bb A'_5$ is injective \cite{IvaJoh08}. Using similar arguments as in \cite{AltKre12a, HohScho98} we can relate the above iterative scheme to the classical Newton's method.
    
The iterative scheme \ref{IterationScheme} can also be generalized to the case of multiple illuminations $\ein_l , \, l = 1,...,L .$
\begin{iterative}[Multiple illuminations]
Let $r^{(0)}$ be an initial approximation of the radial function. Then, in the $k$th iteration step:
\begin{enumerate}[i.]
\item We assume that we know $r^{(k-1)}$ and we solve the $L$ subsystems
\begin{multline}\label{eqFinal2multi}
\begin{pmatrix}
\bb{A}_1  \\
\bb{A}_2\\
\bb{A}_3 \\
\bb{A}_4 \end{pmatrix} (r^{(k-1)}; \zeta_{e,l}) + \begin{pmatrix}
\bb{B}_1  \\
\bb{B}_2\\
\bb{B}_3 \\
\bb{B}_4\end{pmatrix} (r^{(k-1)}; \xi_{h,l} ) + \begin{pmatrix}
\bb{C}_1  \\
\bb{C}_2\\
\bb{C}_3 \\
\bb{C}_4\end{pmatrix} (r^{(k-1)}; \zeta_{h,l}  ) \\ + \begin{pmatrix}
\bb{D}_1  \\
\bb{D}_2\\
\bb{D}_3 \\
\bb{D}_4\end{pmatrix} (r^{(k-1)}; \xi_{e,l} ) = \begin{pmatrix}
\bb{F}_{1,l}   \\
\bb{F}_{2,l} \\
\bb{F}_{3,l} \\
\bb{F}_{4,l}
\end{pmatrix} ,  \quad l=1,...,L
\end{multline}
to obtain the densities $\zeta_{e,l}^{(k)}, \, \xi_{h,l}^{(k)}, \,\zeta_{h,l}^{(k)}$ and $\xi_{e,l}^{(k)}.$
\item Then, keeping the densities fixed, we solve the overdetermined version of the linearized fifth equation of \eqref{eqFinal}
\begin{equation*}
\begin{pmatrix}
\bb A'_5 (r^{(k-1)}; \zeta_{e,1}^{(k)}+  \zeta_{h,1}^{(k)}) \\
\bb A'_5 (r^{(k-1)}; \zeta_{e,2}^{(k)}+ \zeta_{h,2}^{(k)}) \\
\vdots \\
\bb A'_5 (r^{(k-1)}; \zeta_{e,l}^{(k)}+ \zeta_{h,l}^{(k)})
\end{pmatrix}  q = \begin{pmatrix}
\bb F_{5,1} - \bb A_5 (r^{(k-1)}; \zeta_{e,1}^{(k)} +\zeta_{h,1}^{(k)}) \\
\bb F_{5,2} - \bb A_5 (r^{(k-1)}; \zeta_{e,2}^{(k)}+ \zeta_{h,2}^{(k)}) \\
\vdots \\
\bb F_{5,L} - \bb A_5 (r^{(k-1)}; \zeta_{e,l}^{(k)} + \zeta_{h,l}^{(k)})
\end{pmatrix}
\end{equation*}
for $q$ and we update the radial function $r^{(k)} = r^{(k-1)} +q.$
\end{enumerate}
The iteration stops when a suitable stopping criterion is satisfied.
\end{iterative}  
  
  \section{Numerical implementation}\label{numerics}
  
  In this section, we present numerical examples that illustrate the applicability of the proposed method. We use quadrature rules for integrating the singularities considering trigonometric interpolation. The convergence and error analyses are given in \cite{Kre95, Kre14}. Then, the system of integral equations is solved using the Nystr\"om method. The parametrized forms of the integral operators are presented in \cite[Section 4]{GinMin16}. We approximate the smooth kernels with the trapezoidal rule and the singular ones with the well-known quadratures rules \cite{Kre14}.
  
In the following examples, we consider two different boundary curves. A peanut-shaped and an apple-shaped boundary with radial function 
\[
r (t) =  (0.5 \cos^2 t + 0.15 \sin^2 t)^{1/2}, \quad t \in [0,2\pi], 
\]
and
\[
r (t) =  \frac{0.45 + 0.3 \cos t -0.1 \sin 2t}{1+0.7 \cos t}, \quad t \in [0,2\pi],
\]
respectively. 

To avoid an inverse crime, we construct the simulated far-field data using the numerical scheme \eqref{eq_general1} and considering double amount of quadrature points compared to the inverse problem. We approximate the radial function $q$ by a trigonometric polynomial of the form
\[
q (t) \approx \sum_{k=0}^m a_k \cos kt + \sum_{k=1}^m b_k \sin kt , \quad t\in[0,2\pi],
\]
and we consider $2n$ equidistant points $t_j = j\pi/n, \, j=0,...,2n-1 .$ The well-posed subsystem \eqref{eqFinal2} does not require any special treatment. The ill-posed linearized far-field equation \eqref{linear_eq} is solved by Tikhonov regularization. We rewrite \eqref{linear_eq} as
\begin{equation}\label{linear_eq2}
 (\bb A'_5 (r^{(k-1)}; \zeta^{(k)})) (q)= \bb F_5 - \bb A_5 (r^{(k-1)}; \zeta^{(k)}),
\end{equation}
for $\zeta^{(k)} := \zeta_e^{(k)} +\zeta_h^{(k)},$ and we decompose \eqref{frechet} as
\begin{equation}\label{frechet2}
((\bb A'_5 (r; \zeta)) (q))(t) = ((\bb G_1 (r; \zeta)) (q))(t) + ((\bb G_2 (r; \zeta)) (q'))(t) ,
\end{equation}
where
\begin{align*}
((\bb G_1 (r; \zeta)) (q))(t) &:= \frac{e^{i\pi /4}}{\sqrt{8\pi \kappa_0}} \int_0^{2\pi} e^{-i\kappa_0 \bm{\hat{z}}(t) \cdot \bm z(s)} \left[ -i\kappa_0 \bm{\hat{z}}(t) \cdot (\cos s,\, \sin s) |\bm z'  (s)| \right. \\
 &\phantom{=}\left. + \frac{\bm z'  (s) \cdot (-\sin s,\, \cos s)}{|\bm z'  (s)|} \right] \zeta (s) q(s) ds , \\
((\bb G_2 (r; \zeta)) (q'))(t) &:= \frac{e^{i\pi /4}}{\sqrt{8\pi \kappa_0}} \int_0^{2\pi} e^{-i\kappa_0 \bm{\hat{z}}(t) \cdot \bm z(s)}  \frac{\bm z'  (s) \cdot  (\cos s,\, \sin s)}{|\bm z'  (s)|}  \zeta (s) q' (s) ds .
\end{align*}

We replace the derivative of $q$ by the derivative of the trigonometric interpolation polynomial
\[
q' (t) \approx  \sum_{j=0}^{2n-1} \textbf Q (t ,t_j ) q (t_j),
\]
with weight
\[
\textbf Q (t_k , t_j) = \frac12 (-1)^{k-j}  \cot \frac{t_k -t_j}2 , \quad k \neq j, \,  k = 0,...,2n-1 .
\]

Then, at the $k$th step we minimize the Tikhonov functional of the discretized equation
\[
\|\textbf A \textbf T \textbf x - \textbf b\|^2_2 + \lambda \|\textbf x\|_p^p , \quad \lambda >0 ,
\]
where $\textbf x \in \R^{(2m+1)\times 1}$ is the vector with the unknowns coefficients $a_0 ,...,a_m,$ $b_1 ,..., b_m$ of the radial function, and $\textbf A \in \C^{2n \times 2n}, \, \textbf b \in \C^{2n \times 1}$ are given by
\begin{align*}
\textbf A_{kj} &= \textbf M^{\bb G_1} (t_k , t_j) + \textbf M^{\bb G_2} (t_k , t_j) \textbf Q (t_k , t_j),  \\
\textbf b_k &= \bb F_5 (t_k) - ( \textbf M^{\bb A_5} \bm \zeta )(t_k ) ,
\end{align*}
for $k,j = 0,...,2n-1.$ The multiplication matrix $\textbf T\in \R^{2n\times (2m+1)}$ stands for the trigonometric functions of the approximated radial function and is given by
\begin{equation*}
\textbf T_{kj} =
\begin{cases}
\cos \frac{kj\pi}n , & k = 0,...,2n-1, \, j = 0,...,m \\
\sin \frac{k(j-m)\pi}n ,  & k = 0,...,2n-1, \, j = m+1,...,2m
\end{cases}
\end{equation*}
Here $p\geq 0$ defines the corresponding Sobolev norm. Since $q$ is real valued we solve the following regularized equation
\begin{equation}\label{tikhonov}
\left(
\textbf T^\top \left(  \RE (\textbf A)^\top  \RE (\textbf A) +  \IM (\textbf A)^\top \IM (\textbf A)  \right) \textbf T + \lambda_k \textbf I_p \right) \textbf x  = \textbf T^\top \left(  \RE (\textbf A)^\top  \RE (\textbf b) +  \IM (\textbf A)^\top \IM (\textbf b)  \right),
\end{equation}
on the $k$th step, where the matrix $\textbf I_p \in \R^{(2m+1)\times (2m+1)}$ corresponds to the Sobolev $H^p$ penalty term. We solve \eqref{tikhonov} using the conjugate gradient method.
We update the regularization parameter in each iteration step $k$ by 
\[
\lambda_k  = \lambda_0 \left( \frac23\right)^{k-1}, \quad k=1,2,...
\]
for some given initial parameter $\lambda_0 >0.$ To test the stability of the iterative method against noisy data, we add also noise to the far-field patterns with respect to the $L^2-$norm
\[
e^\infty_\delta = e^\infty + \delta_1 \frac{\|e^\infty \|_2}{\|u\|_2} u ,  \quad 
h^\infty_\delta = h^\infty + \delta_2 \frac{\|h^\infty \|_2}{\|v\|_2} v ,
\]
for some given noise levels $\delta_1 , \, \delta_2$ where $ u = u_1 +\i u_2 , \,  v = v_1 +\i v_2,$ for $u_1 , u_2 , v_1, v_2 \in \R$ normally distributed random variables.

\begin{figure}[t]
\begin{center}
\includegraphics[scale=0.7]{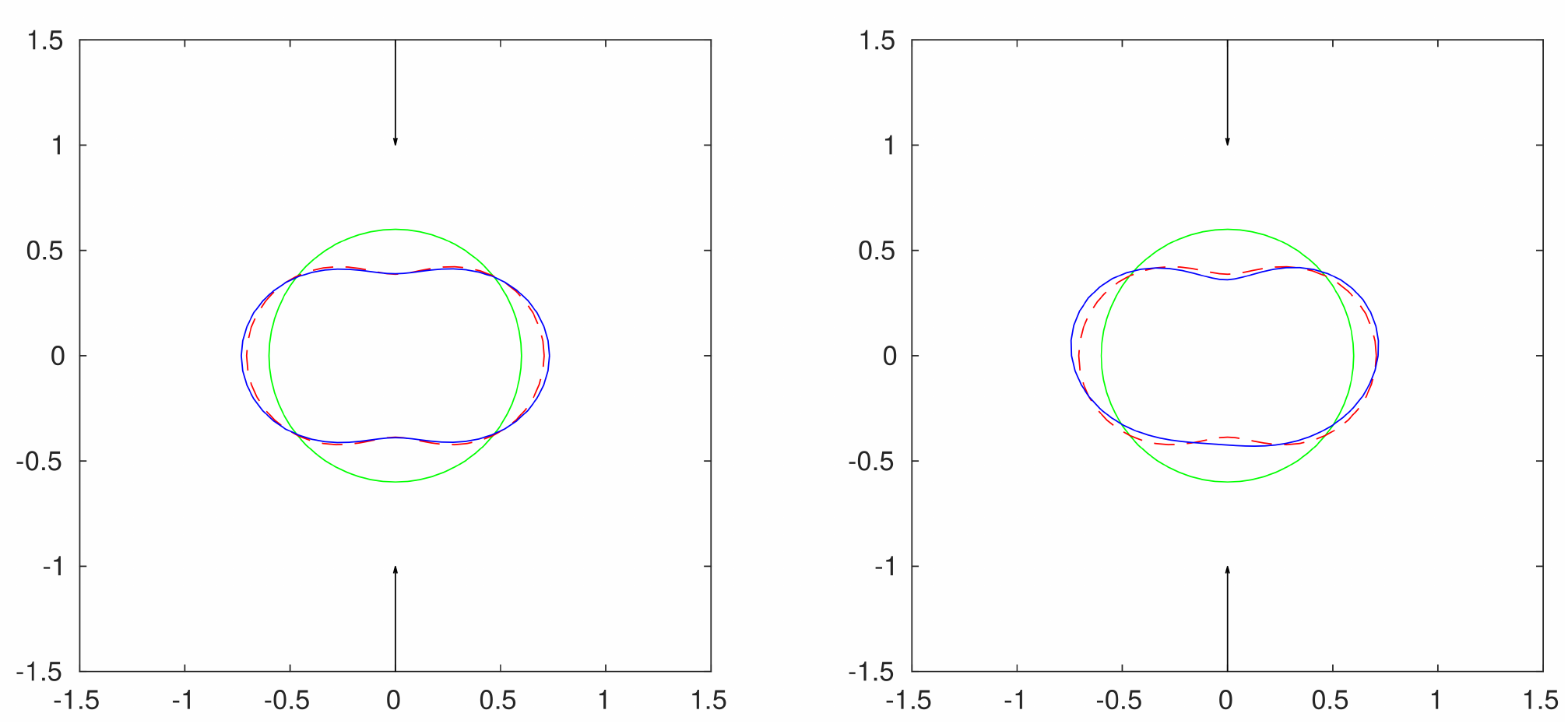}
\caption{Reconstruction of a peanut-shaped boundary for exact (left) and noisy (right) data. }\label{Fig1b}
\end{center}
\end{figure}

Already in simpler cases \cite{AltKre12a}, the knowledge of the far-field patterns for one incident wave is not enough to produce satisfactory reconstructions. Thus, we will also use multiple incident directions. To do so, we have to consider different values of the polar angle $\phi$ since in $\R^2 ,$ as we see from \eqref{incident_el}, corresponds to the incident direction. We set
\[
\textbf d_l = \left(  \cos \frac{2\pi l}L , \, \sin \frac{2\pi l}L \right), \quad l=1,...,L.
\]

\subsection{Numerical results}

In all examples we set the exterior parameters $(\epsilon_0 , \mu_0) = (1,1)$ and the interior $(\epsilon_1 , \mu_1) = (2,2).$ We use $n=64$ collocation points for the direct problem and $n=32$ for the inverse. We set $\theta = \pi /3$ and $\lambda_0  \in [0.5, \, 0.8]$ as the initial regularization parameter.

We present reconstructions for different boundary curves, different number of incident directions and initial guesses for exact and perturbed far-field data. In all figures the initial guess is a circle with radius $r_0 ,$ a green solid line, the exact curve is represented by a dashed red line and the reconstructed by a solid blue line. The arrows denote the directions of the incoming incident fields.

\begin{figure}[t!]
  \centering
  \includegraphics[scale=0.7]{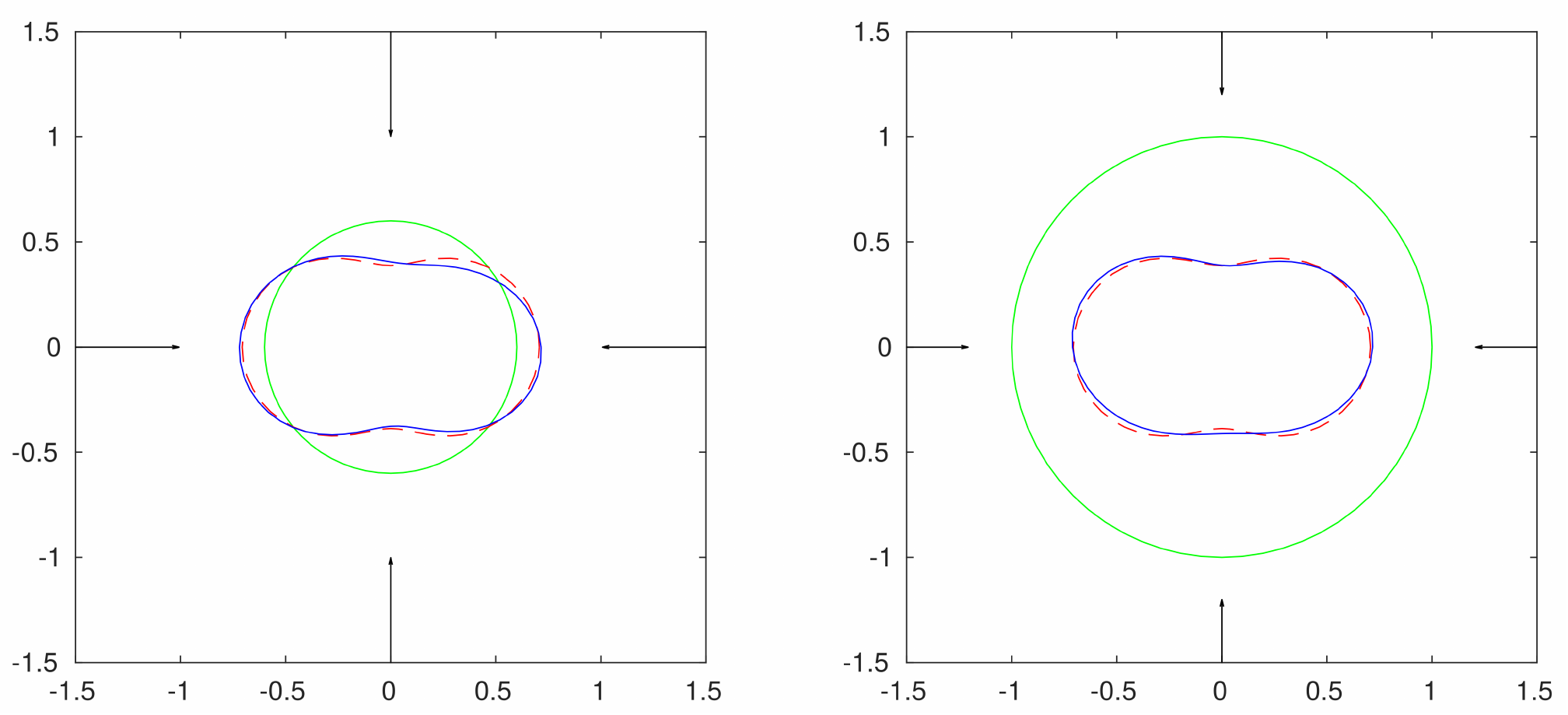}
\caption{Reconstruction of a peanut-shaped boundary for noisy data and different initial guesses. }\label{Fig2}

  \vspace*{\floatsep}

  \includegraphics[scale=0.7]{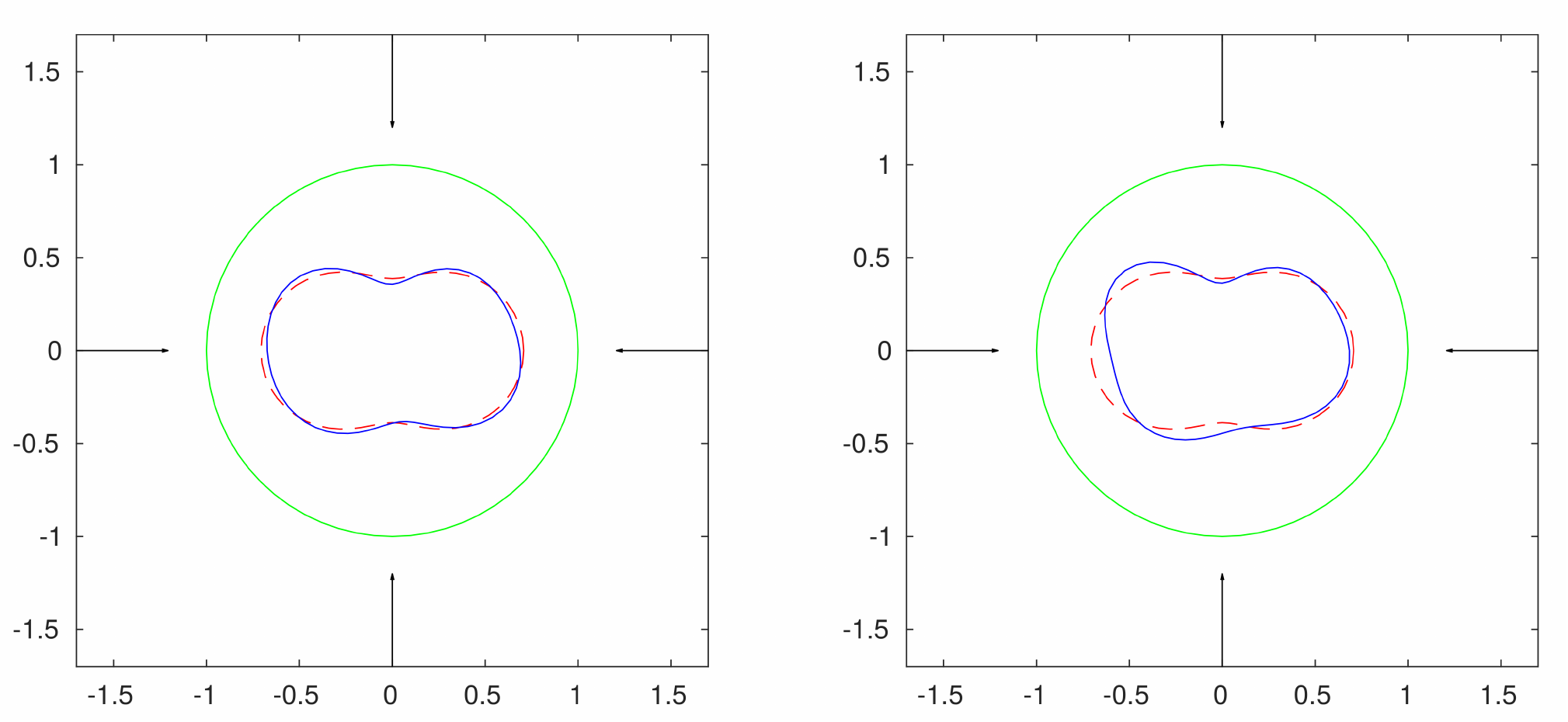}
\caption{Reconstruction of a peanut-shaped boundary for exact (left) and noisy (right) data. }\label{Fig3}
\end{figure}

In the first three examples we consider the peanut-shaped boundary. In the first example, the regularized equation \eqref{tikhonov} is solved with $L^2$ penalty term, meaning $p=0$ and $m=3$ coefficients. We solve equation \eqref{linear_eq3} for different incident directions. The reconstructions for $\omega = 2.5$ and $r_0 = 0.6$ are presented in Figure~\ref{Fig1b} for two incident fields with directions $\textbf d_{l+1/2}.$ On the left picture, we see the reconstructed curve for exact data and 9 iterations and on the right picture for noisy data with  $\delta_1 = \delta_2 = 5
\%$ and 14 iterations. 
In the second example, we consider equation \eqref{linear_eq}, four incident fields, noisy data $\delta_1 = \delta_2 = 5
\%$ and we keep all the parameters as before. The reconstructions for 
$r_0 = 0.6$ and 14 iterations are shown in the left picture of Figure~\ref{Fig2}, and for $r_0 = 1$ and 20 iterations in the right one.
We set $m=5$ and $p=1$ ($H^1$ penalty term) in the third example. The results for $r_0 = 1$ and four incident fields are shown in Figure~\ref{Fig3}. Here $\omega =2$ and we use equation \eqref{linear_eq3}. We need 26 iterations for the exact data and 30 iterations for the noisy data ($\delta_1 = \delta_2 = 3
\%$).

\begin{figure}[t!]
  \centering
  \includegraphics[scale=0.7]{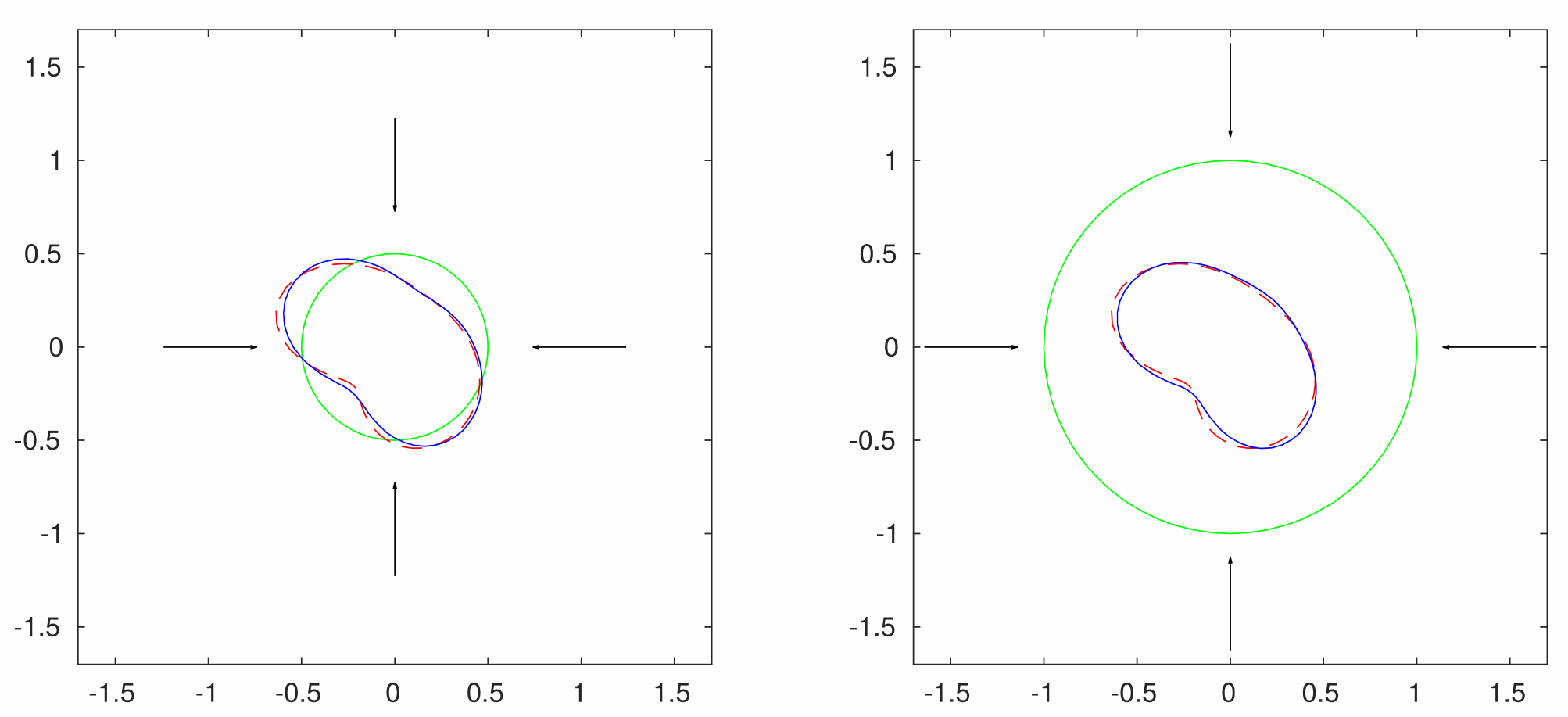}
\caption{Reconstruction of an apple-shaped boundary for exact data and different initial guesses. }\label{Fig4}

  \vspace*{\floatsep}

  \includegraphics[scale=0.7]{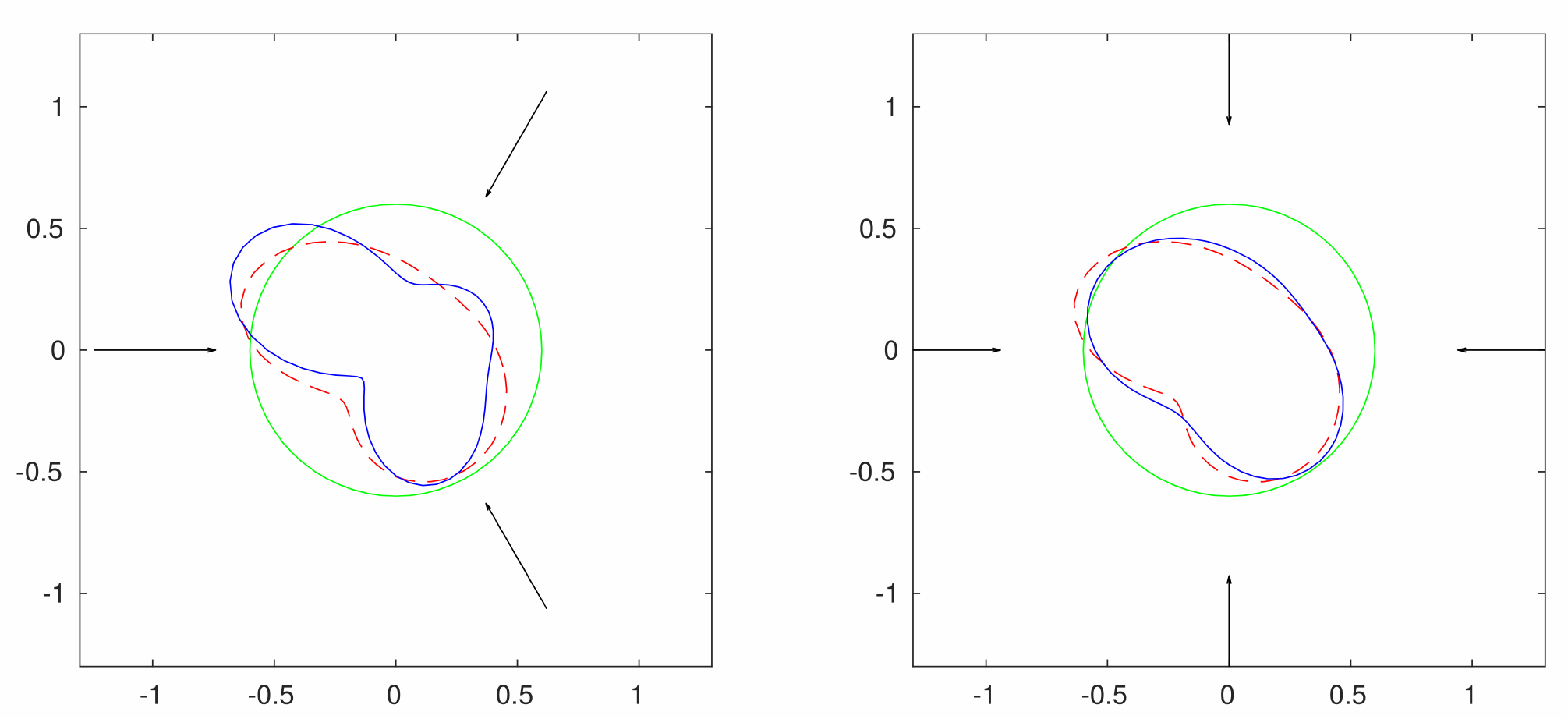}
\caption{Reconstruction of an apple-shaped boundary for noisy data and different number of initial illuminations. }\label{Fig5}
\end{figure}

In the last two examples we consider the apple-shaped boundary, $H^1$ penalty term, $\omega = 3$ and $m=3$ coefficients. In the fourth example, we consider equation \eqref{linear_eq}, noise-free data and four incident fields in order to examine the dependence of the iterative scheme on the initial radial guess. On the left picture of Figure~\ref{Fig4}, we see the reconstructed curve for $r_0 = 0.5$ after 13 iterations and on the right picture for $r_0 = 1$ after 20 iterations. In the last example we consider $\delta_1 = \delta_2 = 3
\%$ noise and $r_0 = 0.6.$ Figure~\ref{Fig5} shows the improvement of the reconstruction for more incident fields. On the left picture we see the results for three incident fields, equation \eqref{linear_eq3}  and 7 iterations and the reconstructed curve for 4 incident fields, equation \eqref{linear_eq}  and 15 iterations is shown on the right picture.

Our examples show the feasibility of the proposed iterative scheme and the stability against noisy data. Considering more than one incident field improves considerably the reconstructions. The choice of the initial guess is also crucial.



\begin{thebibliography}{10}

\bibitem{AltKre12a}
 A. Altundag and R. Kress,  \textit{On a two-dimensional inverse scattering problem for a dielectric,}  Appl. Analysis 91(4)  (2012), pp. 757--771.

\bibitem{CakKre07}
 F. Cakoni and R. Kress,
 \textit{Integral equations for inverse problems in corrosion detection from partial cauchy data},   Inverse Probl. Imaging 1(2) (2007), pp. 229--245.

\bibitem{CakKreSchu10}
F. Cakoni, R. Kress  and C. Schuft,  \textit{Integral equations for shape and impedance reconstruction in
  corrosion detection},  Inverse Probl. 26 (2010), pp. 095012.

\bibitem{CanLee91}
A.C. Cangellaris and R. Lee,  \textit{Finite element analysis of electromagnetic scattering from
  inhomogeneous cylinders at oblique incidence}, IEEE Trans. Ant. Prop. 39  (1991), pp. 645--650.

\bibitem{ChaGinMin16}
R. Chapko, D. Gintides  and L. Mindrinos,  \textit{The inverse scattering problem by an elastic inclusion},   submitted  (Preprint on
  ArXiv:1610.07376) (2016), pp. 19.

\bibitem{ColKre13}
D. Colton and R. Kress, \textit{ Integral equation methods in scattering theory}, Classics in Applied Mathematics, Society for Industrial and Applied  Mathematics, Philadelphia, 2013.

\bibitem{ColKre14}
D. Colton  and R. Kress, \textit{Inverse acoustic and electromagnetic scattering theory}, 3~ed.,
  vol.~93 of  Applied Mathematical Sciences,
 Springer-Verlag, New York, 2013.

\bibitem{EckKre07}
H. Eckel  and R. Kress, \textit{Nonlinear integral equations for the inverse electrical impedance
  problem},  Inverse Probl. 23(2) (2007), pp. 475.

\bibitem{GinMin11}
D. Gintides and L. Midrinos,  \textit{Inverse scattering problem for a rigid scatterer or a cavity in
  elastodynamics},  ZAMM Z. Angew. Math. Mech. 91(4) (2011), pp. 276--287.

\bibitem{GinMin16}
D. Gintides  and L. Mindrinos,  \textit{The direct scattering problem of obliquely incident electromagnetic
  waves by a penetrable homogeneous cylinder}, J. Integral Equations Appl. 28(1) (2016), pp. 91--122.

\bibitem{HohScho98}
T. Hohage and C. Schormann,  \textit{A newton-type method for a transmission problem in inverse
  scattering},  Inverse Probl. 14(5) (1998), pp. 1207.

\bibitem{IvaJoh07}
O. Ivanyshyn and B.T. Johansson,  \textit{Nonlinear integral equation methods for the reconstruction of an
  acoustically sound-soft obstacle}, J. Integral Equations Appl. 19(3) (2007), pp. 289--308.

\bibitem{IvaJoh08}
O. Ivanyshyn and B.T. Johansson,  \textit{Boundary integral equations for acoustical inverse sound-soft
  scattering}, J. Inv. Ill-posed Problems 16(1) (2008), pp. 65--78.

\bibitem{JohSle07}
B.T. Johansson and B.D. Sleeman,  \textit{Reconstruction of an acoustically sound-soft obstacle from one
  incident field and the far-field pattern},  IMA J. Appl. Math. 72 (2007), pp. 96--112.

\bibitem{Kre95}
R. Kress,  On the numerical solution of a hypersingular integral equation in
  scattering theory, J. Comput. Appl. Math. 61(3) (1995), pp. 345--360.

\bibitem{Kre14}
R. Kress,  \textit{A collocation method for a hypersingular boundary integral equation
  via trigonometric differentiation}, J. Integral Equations Appl. 26(2) (2014), pp. 197--213.

\bibitem{Kre99}
R, Kress,  \textit{Linear Integral Equations}, 3~ed.  Springer, New York, 2014.

\bibitem{KreRun05}
R. Kress  and W. Rundell,  \textit{Nonlinear integral equations and the iterative solution for an
  inverse boundary value problem}, Inverse Probl. 21 (2005), pp. 1207--1223.

\bibitem{Lee15}
K.M. Lee,  \textit{Inverse scattering problem from an impedance crack via a composite
  method}, Wave Motion 56 (2015), pp. 43--51.

\bibitem{LucPanSche10}
M. Lucido, G.  Panariello  and F. Schettiho, \textit{
Scattering by polygonal cross-section dielectric cylinders at oblique
  incidence},  IEEE Trans. Ant. Prop. 58 (2010), pp. 540--551.

\bibitem{NakSleWan12}
G. Nakamura, B.D. Sleeman and H. Wang,  \textit{On uniqueness of an inverse problem in electromagnetic obstacle
  scattering for an impedance cylinder}, Inverse Probl. 28(5) (2012), pp. 055012.

\bibitem{NakWan13}
G. Nakamura and H. Wang,  \textit{The direct electromagnetic scattering problem from an imperfectly
  conducting cylinder at oblique incidence}, 
 J. Math. Anal. Appl. 397 (2013), pp. 142--155.

\bibitem{TsiAliAnaKak07}
N.L. Tsitsas, E.G. Alivizatos, H.T. Anastassiu and D.I. Kaklamani,
\textit{ Optimization of the method of auxiliary sources (mas) for oblique  incidence scattering by an infinite dielectric cylinder},  Electrical Engineering 89 (2007), pp. 353--361.

\bibitem{WanNak12}
H.  Wang  and G. Nakamura,  \textit{The integral equation method for electromagnetic scattering problem
  at oblique incidence},  Appl. Num. Math. 62 (2012), pp. 860--873.

\end{thebibliography}
\end{document}